\date{}
\newtheorem{theorem}{Theorem}[section]
\newtheorem*{theorem*}{Theorem}
\newtheorem{lemma}[theorem]{Lemma}
\newtheorem{corollary}[theorem]{Corollary}
\newtheorem{proposition}[theorem]{Proposition}
\theoremstyle{definition}
\theoremstyle{plain}
\newcommand{\dd}{\mathrm d}
\newcommand\Matching[2]{%
	\begin{tikzpicture}[scale=0.33,font=\footnotesize,baseline=-1mm]
		\draw(-0.5,0) -- ++ (#1,0);
		\foreach \x in {1,...,#1}{
			\draw[circle,fill] (\x-1,0)circle[radius=1mm]node[below]{$\x$};
		}
		\foreach \x/\y in {#2} {
			\draw(\x-1,0) to[bend left=45] (\y-1,0);
		}
	\end{tikzpicture}%
}
\newcommand{\proj}{\operatorname{proj}}
\newcommand{\tr}{\tau}
\newcommand{\Le}{\mathrm L}
\newcommand{\NC}{M}
\newcommand{\CC}{\mathbb{C}}
\newcommand{\NN}{\mathbb{N}}
\newcommand{\std}{\operatorname{\sigma}}
\newcommand{\cov}{{\rm cov}}
\newcommand{\es}{\emptyset}
\newcommand{\RR}{\mathbb{R}}
\newcommand\ceq{\coloneqq}
\let\originalleft\left
\let\originalright\right
\renewcommand{\left}{\mathopen{}\mathclose\bgroup\originalleft}
\renewcommand{\right}{\aftergroup\egroup\originalright}
\begin{document}

\author{Benjamin Dadoun}
\author{Pierre Youssef}
\title{Maximal correlation and monotonicity of free entropy and Stein discrepancy}

\maketitle

\begin{abstract}\noindent%
  We introduce the
  maximal correlation coefficient~$R(\NC_1,\NC_2)$
  between two noncommutative probability
  subspaces~$\NC_1$ and~$\NC_2$ and show that the maximal
  correlation coefficient between the sub-algebras generated by~$s_n\coloneqq x_1+\ldots +x_n$ and~$s_m:=x_1+\ldots +x_m$ equals
  $\sqrt{m/n}$ for $m\le n$, where $(x_i)_{i\in \NN}$ is a sequence of free and identically distributed noncommutative
  random variables. This is the free-probability
  analogue of a result by Dembo--Kagan--Shepp in classical
  probability. As an application, we use this estimate to provide another simple proof
  of the monotonicity of the free entropy and free Fisher information in the free central limit theorem. 
  Moreover, we prove that the free Stein Discrepancy introduced by Fathi and Nelson is non-increasing along 
  the free central limit theorem. 
  \end{abstract}

\section{Introduction and main result}
Pearson's correlation coefficient
$\rho(X_1,X_2)\ceq\cov(X_1,X_2)/{\std(X_1)\std(X_2)}$
is a standard measure of (bivariate)
dependency. The \textit{maximal correlation} $R(X_1,X_2)$ between
two random variables~$X_1$ and~$X_2$ is then naturally defined as
the supremum of $\rho(f(X_1),g(X_2))$ over $\Le^2$-functions
$f,g$.

In 2001, Dembo, Kagan, and Shepp~\cite{Dembo01}
investigated this maximal correlation for the partial sums
$S_n\ceq X_1+\cdots+X_n$
of i.i.d.\
random variables~$X_i$. Namely, regardless of the $X_i$'s distribution, they
showed that
\[R(S_n,S_m)\le\sqrt{\frac mn},\quad m\le n,\]
with equality if $\sigma(X_1)<\infty$.

The aim of this note is to prove the analogous statement in
the context of free probability, a theory which was initiated by Voiculescu (see \cite{Voiculescu94}) in the 1980's and has since flourished into an established area with 
connections to several other fields. We refer to \cite{Hiai00,Mingo17} for an introduction on the subject and an extensive list of references. 
Before stating our main result, let us first recall the necessary framework. 

Let $(\NC,\tr)$ be a noncommutative, faithful, tracial
probability space, that is a unital $*$-algebra $\NC$
equipped with a $*$-linear form $\tr:\, \NC\to\CC$
such that $\tr(1)=1$ and for all
$x,y\in\NC$, $\tr(xy)=\tr(yx)$ (trace property),
$\tr(x^*x)\ge0$ (non-negativity), and $\tr(x^*x)=0$ if and only
if $x=0$ (faithfulness).
Elements of $\NC$ are called
\textit{noncommutative random variables}, and the
\textit{distribution} of $(x_1,\ldots,x_n)\in\NC^n$ is the family
of \textit{moments}
$(\tr(x_{i_1}\cdots x_{i_r})\colon r\ge1,1\le i_1,\ldots,i_r\le n)$. 
Finally, we recall the notion of \textit{freeness}, which is the
counterpart of classical independence:
we say that (unital)
sub-algebras $\NC_1,\NC_2,\ldots\subseteq\NC$ are
\textit{free} if ``any alternating product of centered elements
is centered'', i.e., $\tr(x_1\cdots x_r)=0$ whenever
$\tr(x_j)=0$ for all $1\le j\le r$ and $x_j\in\NC_{i_j}$ with
$i_1\neq i_2,i_2\neq i_3,\ldots,i_{r-1}\neq i_r$.
In this vein, noncommutative random variables are free if the
respective sub-$*$%
-algebras they span are free.

The inner product
\[\langle x,y\rangle\ceq\tr(x^*y),\qquad(x,y)\in\NC^2,\]
confers on~$\NC$ a natural~$\Le^2$-structure with
norm $\|x\|_2\ceq\sqrt{\langle x,x}\rangle$, so we may
w.l.o.g.\ consider instead its Hilbert space completion
$\Le^2(\NC)$.
More generally, $\Le^2(\NC')$ will denote the closure
in~$\Le^2(\NC)$ of the sub-algebra $\NC'\subseteq\NC$.
If $\NC'=\CC\langle x_1,\ldots,x_n
\rangle$
is the sub-$*$%
-algebra of noncommutative polynomials in
$x_1,\ldots,x_n\in\NC$
\emph{and}
$x_1^*,\ldots,x_n^*\in \NC$%
, we refer to~$\Le^2(\NC')$ as
$\Le^2(x_1,\ldots,x_n)$.


Let
$\cov(x,y)\ceq\langle x-\tr(x)\cdot1,y-\tr(y)\cdot1\rangle$ and
$\std(x)^2\ceq\cov(x,x)$. Just like in classical probability,
we can define the Pearson correlation coefficient between~$x$
and~$y$ by
\[\rho(x,y)\ceq\frac{\cov(x,y)}{\std(x)\std(y)}.\]
Note that we have $\lvert\rho(x,y)\rvert\le1$ by the Cauchy--Schwarz
inequality. Given
$\NC_1,\NC_2\subseteq\NC$ two subspaces, we call
\[R(\NC_1,\NC_2)\ceq\!\sup_{\NC_1\times\NC_2}\rho\]
the \textit{maximal correlation coefficient} between~$\NC_1$
and~$\NC_2$.
For $\NC_1=\Le^2(x)$ and $\NC_2=\Le^2(y)$ we simply write~%
$R(x,y)$, and we call it the maximum correlation
between the noncommutative random variables~$x$ and~$y$.

We are now ready to state our main result.  
\begin{theorem}\label{thm:maxcorr}
	Let $(x_i)_{i\in \NN}$ be a sequence of free, non-zero, identically
	distributed, noncommutative random variables, and let
	$s_n\ceq x_1+\cdots+x_n$. Then for any $m\le n$, we have 
	$$
	R(s_n,s_m)=\sqrt{\frac{m}{n}}.
	$$
\end{theorem}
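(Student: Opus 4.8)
The plan is to recast the maximal correlation as an operator norm and bound the two sides separately. Writing $P_1$ and $P_2$ for the orthogonal projections of $\Le^2(\NC)$ onto the centered subspaces $\Le^2(s_n)\ominus\CC1$ and $\Le^2(s_m)\ominus\CC1$, a scale- and phase-invariance argument (rotate $x\mapsto e^{i\theta}x$ to make the correlation real and nonnegative) identifies
\[
R(s_n,s_m)=\|P_2P_1\|=\sup\bigl\{\|P_2\xi\|_2:\ \xi\in\Le^2(s_n)\ominus\CC1,\ \|\xi\|_2=1\bigr\}.
\]
Since $\rho$ is invariant under adding constants, I may center and assume $\tr(x_i)=0$; then faithfulness together with $x_i\neq0$ gives $\std(x_i)^2=\tr(x_i^*x_i)>0$, so all denominators below are nonzero. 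For the lower bound I simply test the definition on the pair $(s_n,s_m)$ itself: freeness gives $\langle x_i,x_j\rangle=\tr(x_i^*x_j)=0$ for $i\neq j$ (an alternating product of centered elements), whence $\cov(s_n,s_m)=m\,\std(x_1)^2$, while $\std(s_n)^2=n\,\std(x_1)^2$ and $\std(s_m)^2=m\,\std(x_1)^2$. Thus $\rho(s_n,s_m)=m/\sqrt{nm}=\sqrt{m/n}$, giving $R(s_n,s_m)\ge\sqrt{m/n}$.

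The upper bound is the heart of the matter, and the first move is to pass from $\Le^2(s_m)$ to the larger algebra of the first $m$ variables. Let $\mathcal B_A$ denote the subalgebra generated by $\{x_i:i\in A\}$ for $A\subseteq\{1,\dots,n\}$, and let $\mathbb E_A\colon\Le^2(\NC)\to\Le^2(\mathcal B_A)$ be the trace-preserving conditional expectation, i.e.\ the orthogonal projection onto $\Le^2(\mathcal B_A)$; write $\mathbb E_m\ceq\mathbb E_{\{1,\dots,m\}}$. Because $s_m$ and $s_m^*$ lie in $\mathcal B_{\{1,\dots,m\}}$, we have $\Le^2(s_m)\ominus\CC1\subseteq\Le^2(\mathcal B_{\{1,\dots,m\}})$, hence $P_2=P_2\mathbb E_m$ and therefore $\|P_2\xi\|\le\|\mathbb E_m\xi\|$ for all $\xi$. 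It thus suffices to prove, for centered $\xi\in\Le^2(s_n)$, the estimate $\|\mathbb E_m\xi\|^2\le\frac mn\|\xi\|^2$.

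To establish this I would exploit the free-product orthogonal decomposition of $\Le^2(\NC)$ into $\CC1$ and the mutually orthogonal spaces of alternating centered words $\mathring{\mathcal A}_{i_1}\cdots\mathring{\mathcal A}_{i_r}$ (with $i_1\neq i_2\neq\cdots\neq i_r$), where $\mathring{\mathcal A}_i\ceq\Le^2(x_i)\ominus\CC1$. In this decomposition each $\mathbb E_A$ is exactly the projection onto the closed span of words all of whose indices lie in $A$. Since the $x_i$ are free and identically distributed, every permutation $\pi$ of $\{1,\dots,n\}$ induces a trace-preserving unitary $U_\pi$ of $\Le^2(\NC)$ fixing $\Le^2(s_n)$ pointwise and satisfying $U_\pi\mathbb E_AU_\pi^*=\mathbb E_{\pi A}$; consequently $\|\mathbb E_A\xi\|$ depends only on $|A|$ whenever $\xi\in\Le^2(s_n)$. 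Using $\|\mathbb E_A\xi\|^2=\langle\mathbb E_A\xi,\xi\rangle$ and averaging over all $A$ with $|A|=m$ then gives
\[
\|\mathbb E_m\xi\|^2=\Bigl\langle\Bigl(\tfrac{1}{\binom{n}{m}}\sum_{|A|=m}\mathbb E_A\Bigr)\xi,\ \xi\Bigr\rangle=\sum_{u\ge1}\frac{(m)_u}{(n)_u}\,\|\xi_u\|^2,
\]
where $\xi=\sum_u\xi_u$ is the decomposition of $\xi$ according to the number $u$ of distinct indices occurring in a word and $(a)_u\ceq a(a-1)\cdots(a-u+1)$. Since $\frac{(m)_u}{(n)_u}=\prod_{j=0}^{u-1}\frac{m-j}{n-j}\le\frac mn$ for every $u\ge1$, the right-hand side is at most $\frac mn\sum_u\|\xi_u\|^2=\frac mn\|\xi\|^2$, which is the desired bound; together with the lower bound this yields $R(s_n,s_m)=\sqrt{m/n}$.

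The main obstacle is the averaging step, and specifically the indispensable role of permutation invariance: the inequality $\|\mathbb E_m\xi\|^2\le\frac mn\|\xi\|^2$ is \emph{false} for general centered $\xi$ (e.g.\ $\xi=x_1$, where $\mathbb E_m\xi=\xi$), and survives only because membership of $\xi$ in $\Le^2(s_n)$ forces $\|\mathbb E_A\xi\|$ to be constant over $|A|=m$, turning $\|\mathbb E_m\xi\|^2$ into the quadratic form of the symmetric average $\frac{1}{\binom{n}{m}}\sum_{|A|=m}\mathbb E_A$. I would therefore concentrate the technical care on justifying the free-product decomposition and the intertwining identity $U_\pi\mathbb E_AU_\pi^*=\mathbb E_{\pi A}$ in the present (not necessarily self-adjoint) setting, and on verifying that each $\mathbb E_A$ acts diagonally as the word-support projection; granting these structural facts, the eigenvalue estimate $(m)_u/(n)_u\le m/n$ is immediate.
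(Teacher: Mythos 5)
Your proposal is correct, and although its skeleton is parallel to the paper's (the same lower-bound computation; the same reduction from the projection onto $\Le^2(s_m)$ to the projection $\mathbb E_m$ onto $\Le^2(x_1,\ldots,x_m)$, which is the paper's $\proj_{\{1,\ldots,m\}}$; and, at bottom, the same combinatorial inequality), the machinery you use for the key estimate $\|\mathbb E_m\xi\|_2^2\le\frac mn\|\xi\|_2^2$ for centered $\xi\in\Le^2(s_n)$ is genuinely different. The paper first proves that conditional expectations onto free subalgebras commute (Lemma~\ref{lem:proj}, whose proof goes through operator-valued free cumulants and a theorem of Nica--Shlyakhtenko--Speicher), deduces the orthogonal Efron--Stein decomposition $\proj_I(z)=\sum_{J\subseteq I}z_J$ (Lemmas~\ref{lem:efronstein} and~\ref{lem:orthogonality}), shows via identical distribution that $\|z_J\|_2$ depends only on $|J|$, and concludes with $\binom mk\le\frac mn\binom nk$ (Proposition~\ref{pro:symmetry}). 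You instead invoke the free-product (Fock-type) orthogonal decomposition of $\Le^2$ into alternating centered words, on which each $\mathbb E_A$ acts as the word-support projection; permutation-induced unitaries, which fix $\Le^2(s_n)$ pointwise, let you replace $\|\mathbb E_m\xi\|_2^2$ by the quadratic form of the average $\frac1{\binom nm}\sum_{|A|=m}\mathbb E_A$, which is diagonal with eigenvalue $(m)_u/(n)_u$ on words with $u$ distinct letters. The two arguments share the same combinatorial core---$(m)_u/(n)_u=\binom mu/\binom nu$, and the span of words with support exactly $J$ is precisely the paper's Efron--Stein component $z_J$, by the same inclusion--exclusion---but your route trades the cumulant machinery for the standard free-product Hilbert-space structure, makes the spectral picture explicit (one diagonalized operator rather than a term-by-term comparison of norms), and mirrors most directly the classical Hoeffding/ANOVA proof of Dembo--Kagan--Shepp; the paper's route stays at the level of conditional expectations and yields reusable statements (commuting projections; $\proj_{\Le^2(x_1,\ldots,x_m)}(p(s_n))=\proj_{\Le^2(s_m)}(p(s_n))$) without ever exhibiting the word decomposition. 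Your reduction step is in fact slightly cheaper than the paper's: you only use the range inclusion $\Le^2(s_m)\ominus\CC1\subseteq\Le^2(x_1,\ldots,x_m)$, not the equality of the two projections on $p(s_n)$ proved in Proposition~\ref{pro:symmetry}(2).

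Two caveats, neither fatal. First, the structural facts you defer (the word decomposition, the diagonal action of $\mathbb E_A$, the intertwining $U_\pi\mathbb E_AU_\pi^*=\mathbb E_{\pi A}$, and the well-definedness and isometry of $U_\pi$ on polynomials modulo $\|\cdot\|_2$-null vectors, which uses freeness \emph{and} identical distribution) must appear in a complete write-up; they are standard and are essentially the content of Lemmas~\ref{lem:proj}--\ref{lem:orthogonality} in different clothing, so this is a matter of exposition, not a gap. Second, your claim that centering plus $x_i\neq0$ forces $\std(x_i)>0$ fails when $x_i$ is a nonzero scalar multiple of $1$ (the centered variable is then $0$ and the lower bound degenerates); the paper's own hypothesis ``non-zero'' has the same blind spot, so this imprecision is inherited rather than introduced.
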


As in the classical setting, the interesting feature of the above statement is its universality as it 
 holds regardless of the distribution of the noncommutative random variables. 
A possible way to prove the above statement consists of using the microstate approach 
by approximating the law of each noncommutative random variable by that of random matrices. One then 
exploits the multidimensional version of the classical maximal correlation inequality to apply it 
for the corresponding random matrices (seen as vectors) before passing to the limit and deducing the above theorem. 
The drawback of this approach is that it won't allow the extension of Theorem~\ref{thm:maxcorr} to 
the multidimensional case. Indeed, by the refutation of Connes embedding problem \cite{JNVWY}, 
there are noncommutative random variables whose joint moments cannot be approximated well
by moments of matrices. This makes it impossible to use the mentioned approach to prove a multidimensional 
version of the above theorem. 
On the contrast, our proof of Theorem~\ref{thm:maxcorr}, which is carried in Section~\ref{sec:proof}, adapts 
the approach of~\cite{Dembo01}  to a noncommutative setting and is readily extendable to the multidimensional setting.   

A celebrated result of Artstein et al \cite{Artstein04} provided a solution to Shannon's problem regarding 
the monotonicity of entropy in the classical central limit theorem. 
In the context of free probability, the concept of \textit{free entropy} and \textit{information} was developped by Voiculescu 
in a series of papers (see for example \cite{Voiculescu02}). 
Two approaches were given for the definition of free entropy, referred to as \textit{microstates} and \textit{non-microstates} and denoted by~$\chi$ and~$\chi^*$ respectively (see~\cite[Chapters~7 and~8]{Mingo17}). These two coincide in the one-dimensional setting, in which case  
the free entropy of a compactly supported probability measure $\mu$ is given by 
$$
\chi(\mu)=\chi^*(\mu):=  \iint_{\RR^2} \log{\lvert a-b\rvert}\, \mu(\dd a)\mu(\dd b)+ \frac34+\frac12 \log (2\pi).
$$
It is not known whether $\chi$ and $\chi^*$ coincide in the multidimensional setting. Our proof of the result presented in the sequel 
extends to the multi-dimensional case for $\chi^*$ only. 
Given a noncommutative probability space $(\NC,\tr)$ and a self-adjoint element $z\in M$, we define~$\chi^*(z)$ as $\chi^*(\mu_z)$ where 
$\mu_z$ denotes the distribution of~$z$, i.e., the probability measure characterized by $\int p\,\dd\mu_z= \tr\big(p(z)\big)$ for all polynomials $p\in \CC[X]$. 

In \cite{Shlyakhtenko07}, Shlyakhtenko proved the monotonicity of the free entropy in the free central limit theorem providing an analogue of the 
result of \cite{Artstein04} in the noncommutative setting. As an application of our maximal correlation estimate, we recover the monotonicity property 
which we state in the next corollary. 
\begin{corollary}\label{cor:monotonicity}
Given $(x_i)_{i\in \NN}$ a sequence of free, identically 
distributed, self-adjoint random variables in $(\NC,\tr)$, one has 
\begin{equation}\label{eq: monotonicity}
\chi^*\left(\frac{s_m}{\sqrt m}\right)\leq \chi^*\left(\frac{s_n}{\sqrt n}\right),
\end{equation}
for every integers $m\leq n$. 
\end{corollary}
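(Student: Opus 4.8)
The plan is to deduce the entropy monotonicity from the maximal correlation estimate of Theorem~\ref{thm:maxcorr} together with the known relationship between free entropy, free Fisher information, and maximal correlation. The natural route is through the free Fisher information $\Phi^*$ and the de~Bruijn-type identity that links $\chi$ and $\Phi^*$ along the free heat (Ornstein--Uhlenbeck) semigroup. Concretely, Voiculescu's theory gives
\[
\chi(z) = \frac12\int_0^\infty\left(\frac{1}{1+t} - \Phi^*\big(z_t\big)\right)\dd t + \frac12\log(2\pi e),
\]
where $z_t = e^{-t/2}z + (1-e^{-t})^{1/2}c$ with $c$ a standard semicircular element free from $z$. **First I would** normalize to unit variance (replacing $x_i$ by $x_i-\tr(x_i)$ and rescaling so that $\std(x_1)=1$), so that both $s_m/\sqrt m$ and $s_n/\sqrt n$ are centered with variance~$1$; this is why the denominators $\sqrt m,\sqrt n$ appear and why the two sides of~\eqref{eq: monotonicity} are directly comparable.

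**The crux** is the superadditivity/projection inequality for free Fisher information, which in the classical case is precisely where the maximal correlation bound enters. The key observation is that $s_m$ is a partial sum sitting inside $s_n$, and Theorem~\ref{thm:maxcorr} controls exactly how strongly the $\Le^2$-space generated by $s_m$ correlates with that generated by $s_n$: we have $R(s_n,s_m)=\sqrt{m/n}$. **I would** use this to establish a Fisher-information inequality of the form
\[
\Phi^*\left(\frac{s_n}{\sqrt n}\right) \le \Phi^*\left(\frac{s_m}{\sqrt m}\right),
\]
by expressing the free Fisher information variationally (via the conjugate variable $\xi$, the noncommutative Hilbert transform, characterized by $\tr(\xi p(z)) = \tr\otimes\tr\big(\partial p(z)\big)$) and projecting the conjugate variable of $s_n$ onto $\Le^2(s_m)$. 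The maximal correlation coefficient governs the norm loss under this projection, yielding the factor $m/n$ precisely. **The main obstacle** I anticipate is making the projection argument rigorous in the noncommutative setting: unlike classical conditional expectation, one must verify that the orthogonal projection of $\Le^2(s_n)$ onto $\Le^2(s_m)$ interacts correctly with the conjugate variables, and that the correlation bound $R(s_n,s_m)\le\sqrt{m/n}$ translates into the correct contraction factor on the Fisher information rather than merely a Cauchy--Schwarz estimate on a single inner product.

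**Finally** I would integrate the Fisher-information inequality through the semigroup. Since $z_t$ for $z=s_n/\sqrt n$ and $z=s_m/\sqrt m$ both evolve under the same free Ornstein--Uhlenbeck flow, and since adding a free semicircular element $c$ preserves the partial-sum structure (one may absorb $c$ as an extra free summand), the pointwise inequality $\Phi^*\big((s_n/\sqrt n)_t\big) \le \Phi^*\big((s_m/\sqrt m)_t\big)$ should hold for every $t\ge0$. Substituting into the integral formula for $\chi$ then reverses the inequality direction (because $\chi$ involves $-\Phi^*$), giving $\chi(s_m/\sqrt m)\le\chi(s_n/\sqrt n)$, which is exactly~\eqref{eq: monotonicity}. **The cleanest packaging** is to prove the Fisher-information monotonicity first as an independent lemma --- this is the genuinely new input from the maximal correlation estimate --- and then treat the passage to entropy as a standard consequence of the de~Bruijn identity, citing \cite{Voiculescu02,Shlyakhtenko07} for the integral representation rather than reproving it.
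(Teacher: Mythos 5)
Your overall route is the same as the paper's: first prove monotonicity of the free Fisher information along the partial sums using Theorem~\ref{thm:maxcorr} and the conjugate variables, then integrate via the representation \eqref{eq: integral representation} together with the free divisibility of the semicircular law. However, the crux --- the Fisher-information inequality --- is exactly the step you leave unproved, and the projection you propose goes the wrong way. Projecting $\xi(s_n)$ onto $\Le^2(s_m)$ and invoking Theorem~\ref{thm:maxcorr} only gives $\bigl\|\proj_{\Le^2(s_m)}\bigl(\xi(s_n)\bigr)\bigr\|_2\le\sqrt{m/n}\,\|\xi(s_n)\|_2$, a bound which compares $\Phi(s_n)$ with nothing useful: there is no identity relating $\proj_{\Le^2(s_m)}(\xi(s_n))$ to $\xi(s_m)$. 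The missing idea is the projection formula for conjugate variables of free sums (Voiculescu; see \cite[p.~206]{Mingo17}): writing $s_n=s_m+(s_n-s_m)$, one has $\xi(s_n)=\proj_{\Le^2(s_n)}\bigl(\xi(s_m)\bigr)$, i.e.\ the conjugate variable of the \emph{smaller} sum gets projected onto the $\Le^2$-space of the \emph{larger} sum, not the other way around. Granting this, $\Phi(s_n)=\|\xi(s_n)\|_2^2=\langle\xi(s_n),\xi(s_m)\rangle$, and since $\xi(s_n)\in\Le^2(s_n)$ and $\xi(s_m)\in\Le^2(s_m)$ are both centered, Theorem~\ref{thm:maxcorr} yields $\Phi(s_n)\le\sqrt{m/n}\,\sqrt{\Phi(s_n)}\,\sqrt{\Phi(s_m)}$, hence $\Phi(s_n)\le\frac mn\Phi(s_m)$; the homogeneity $\Phi(\alpha z)=\alpha^{-2}\Phi(z)$ then gives $\Phi(s_n/\sqrt n)\le\Phi(s_m/\sqrt m)$. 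Note that, contrary to the worry you raise, the correct argument \emph{is} ``merely a Cauchy--Schwarz estimate on a single inner product''; what makes it work is the projection identity above, not any refinement of Cauchy--Schwarz.

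A second, smaller gap is in the integration step: ``absorbing $c$ as an extra free summand'' does not preserve the structure needed, since $s_n/\sqrt n+\sqrt t\,x$ would then involve $n+1$ summands which are neither identically distributed nor normalized by $\sqrt{n+1}$. The correct use of divisibility is to split the semicircular perturbation: take $(y_i)_{i\in\NN}$ free semicircular elements of variance $t$, free from the $x_i$'s; then $s_n/\sqrt n+\sqrt t\,x$ has the same distribution as $n^{-1/2}\sum_{i=1}^n(x_i+y_i)$ and $s_m/\sqrt m+\sqrt t\,x$ has the same distribution as $m^{-1/2}\sum_{i=1}^m(x_i+y_i)$, i.e.\ both are normalized partial sums of the \emph{same} sequence of free, identically distributed, self-adjoint variables $x_i+y_i$. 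The Fisher inequality therefore applies for every $t\ge0$, and substituting into \eqref{eq: integral representation} (the minus sign in front of $\Phi$ reverses the inequality) gives \eqref{eq: monotonicity}. Your Ornstein--Uhlenbeck variant of the de Bruijn identity would work equally well, but only once this splitting is made explicit.
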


As in the classical setting, the monotonicity of the entropy follows from that of the Fisher information. 
In Section~\ref{sec:entropy}, we prove the latter as a consequence of Theorem~\ref{thm:maxcorr}. 
The idea of using the maximal correlation inequality in this context goes back to Courtade~\cite{Courtade16} who used the result of 
Dembo, Kagan, and Shepp~\cite{Dembo01} to provide an alternative proof of the monotonicity of entropy in the classical setting. 

Formulated alternatively, the above corollary states that given a compactly supported probability measure $\mu$ and positive integers $m\le n$, one has \begin{equation}
  \chi^*({m^{-1/2}}_*\mu^{\boxplus m})\le \chi^*({n^{-1/2}}_*
\mu^{\boxplus n}),
\label{eq:entconv}
\end{equation}
where~$\boxplus$ denotes the free convolution
operation (so~$\mu^{\boxplus n}$ is the
distribution of the sum of~$n$ free copies of an element~$x$ with distribution $\mu$),
and ${\alpha}_*$ is the pushforward operation by the
dilation $t\mapsto\alpha t$.
As a matter of fact, it is possible to make sense
of~$\mu^{\boxplus t}$ for all \emph{real} $t\ge1$
(see~\cite{Nica96}). Very recently, Shlyakhtenko and Tao~\cite{Tao20} extended \eqref{eq:entconv} to real-valued exponents while providing two different proofs. 
It would be interesting to see if the argument in this paper based on maximal correlation could be extended to cover non-integer exponents.

Another consequence of Theorem~\ref{thm:maxcorr} concerns the monotonicity of the \textit{free Stein discrepancy} along the free central limit theorem. 
Stein discrepancy measures in some sense how far is a probability measure from another one characterized by some integration by parts formula. 
Using the classical maximal correlation inequality, it was proven by Courtade, Fathi and  Pananjady \cite{CFP19} that the Stein discrepancy (relative to the standard Gaussian measure) 
is non-increasing in the central limit theorem. 
The notion of free Stein discrepancy relative to a semicircular law was introduced by Fathi and Nelson \cite{FN17}. 
Recall that a standard semicircular variable  $S$ is a self-adjoint element of $(\NC,\tr)$ who distribution has density $\frac{1}{2\pi}\sqrt{4-t^2}\, \mathbf{1}_{[-2,2]}(t)$. 
Analogously to the normal distribution, a standard semicircular variable $S\in \NC$ is characterized by the following integration by parts formula stating that 
$$
\langle S, P(S)\rangle = \langle 1\otimes 1, \partial P (S)\rangle,
$$
for every polynomial $P$. Here, $\partial$ denotes the noncommutative derivative and the right hand side dot product refers to the dot product in the Hilbert space 
$\Le^2(\NC)\otimes \Le^2(\NC)$ (see \cite{Mingo17}). 
Following \cite{FN17}, a \textit{free Stein kernel} of $x\in \NC$ is an element $K\in \Le^2(\NC)\otimes \Le^2(\NC)$ such that 
$$
\langle x, P(x)\rangle = \langle K, \partial P (x)\rangle,
$$
for every polynomial $P$. It was shown by C\'ebron, Fathi and Mai \cite{CFM20} that free Stein kernel always exist if $\tr(x)=0$. 
The free Stein discrepancy of $x$ relative to $S$ is then defined as 
$$
\Sigma^*(x\mid S)= \inf_{K} \Vert K-1\otimes 1\Vert_{\Le^2(\NC)\otimes \Le^2(\NC)},
$$
where the infimum is taken over all free Stein kernels $K$ of $x$. We should note that \cite{FN17} introduced the notion of 
free Stein kernel/discrepancy relative to a general potential while we will only be dealing with the particular case of the potential $t^2/2$ 
leading to the notions stated above. 

As an application of our maximal correlation inequality, we obtain the following corollary extending 
the aforementioned monotonicity of Stein discrepancy obtained in \cite{CFP19} to the free setting. 

\begin{corollary}\label{cor:monotonicity-stein}
Given $(x_i)_{i\in \NN}$ a sequence of free, centered, identically 
distributed, self-adjoint random variables in $(\NC,\tr)$ with unit norm, one has 
\begin{equation}\label{eq: monotonicity-stein}
\Sigma^*\left(\frac{s_n}{\sqrt n}\mid S\right)\leq \sqrt{\frac{m}{n}}\; \Sigma^*\left(\frac{s_m}{\sqrt m}\mid S\right),
\end{equation}
for every integers $m\leq n$. 
\end{corollary}

Note that taking $m=1$ in the above corollary, we obtain that  $\Sigma^*\left(\frac{s_n}{\sqrt n}\mid S\right)$ decays 
faster than $C/\sqrt{n}$ for some constant $C$ recovering a result of \cite{CFM20}. 
Similarly to the previous results of this paper, the proof of the above corollary works verbatim in the multidimensional setting. 

\subsection*{Aknowledgement:} 
The authors would like to thank the anonymous referees for their numerous 
generous suggestions which greatly improved the manuscript. For instance, the monotonicity of free Stein discrepancy and its proof was suggested by one of the referees.
 The authors are grateful to Roland Speicher for helpful comments and for bringing to their attention the recent preprint \cite{Tao20}. 
The second named author is thankful to Marwa Banna for helpful discussions.

\section{Proof of Theorem~\ref*{thm:maxcorr}}\label{sec:proof}
Let us fix $x_1,x_2,\ldots\in\NC$. For every finite set
$I\subset\NN$,
we denote 
$$\Le^2_I\ceq\Le^2(x_i\colon i\in I) \quad\text{ and }\quad 
\proj_I(z)\ceq\proj_{\Le^2_I}(z)
$$ 
the orthogonal
projection of $z\in\NC$ onto~$\Le^2_I$, which is
nothing else but the \textit{conditional expectation}~%
of~$z$ given~$\Le^2_I$:
\[y=\proj_I(z)
\iff y\in\Le^2_I\enspace\text{and}\enspace\forall x\in\Le^2_I,\ 
\tr(xy)=\tr(xz).\]
In particular $\proj_I(z)=z$ if $z\in\Le^2_I$ and by the trace property 
$$
\proj_I(xzy)=x\proj_I(z)y\quad \text{ for all
$x,y\in\Le^2_I$. }
$$ 
Note that $\proj_\es(z)=\tr(z)\cdot1$ and $\proj_J\circ\proj_I=\proj_J$
for every $J\subseteq I$ (tower property).
When freeness is further involved, we can say a bit more. The following lemma appears in different forms in the 
literature (\cite{Biane}, \cite[Section~2.5]{Mingo17}), we include its proof 
for completeness. 
\begin{lemma}\label{lem:proj}
  Let $I,J\subset\NN$ be finite sets and suppose
  $(x_k\colon k\in I\cup J)$
  is free. Then:
  \begin{itemize}
  	\item[(i)]\label{lem:proj.1}
  	  if~$z$ is a (noncommutative) polynomial in variables in
  	  $\CC\langle x_j\colon j\in J\rangle$, then
  	  $\proj_I(z)$ is a polynomial in only those variables
  	  that are actually in
  	  $\CC\langle x_k\colon k\in I\cap J\rangle$;
  	\item[(ii)]\label{lem:proj.2}
  	  the projections commute:
  	  $\proj_I\circ\proj_J=\proj_{I\cap J}$.
  \end{itemize}
\end{lemma}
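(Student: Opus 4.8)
The plan is to establish (i) by means of the orthogonal free-product decomposition of $\Le^2$, and then to deduce (ii) from (i) by a short duality argument. Throughout, for $L\subseteq\NN$ let $A_L$ denote the $*$-subalgebra generated by $\{x_k\colon k\in L\}$, so that $\Le^2_L$ is the closure of $A_L$ in $\Le^2(\NC)$. Put $K\ceq I\cap J$. Since $(x_k\colon k\in I\cup J)$ is free, the grouping property of freeness (\cite[Section~2.5]{Mingo17}) guarantees that the three algebras $A_K$, $A_{I\setminus J}$ and $A_{J\setminus I}$, attached to the disjoint index blocks $K$, $I\setminus J$ and $J\setminus I$, are mutually free; consequently $A_I$ is the algebraic free product of $A_K$ and $A_{I\setminus J}$, and $A_J$ that of $A_K$ and $A_{J\setminus I}$.

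First I would invoke Voiculescu's free-product description of the $\Le^2$-space built on mutually free algebras (see \cite[Section~2.5]{Mingo17}, \cite{Biane}). Writing $H_L^\circ\ceq\Le^2_L\ominus\CC1$ for the centered part of each block, the $\Le^2$-closure of the algebra generated by $A_K,A_{I\setminus J},A_{J\setminus I}$ decomposes orthogonally as $\CC1$ together with the closed linear span of the reduced words $a_1\cdots a_n$ ($n\ge1$), where each letter $a_j$ belongs to one of $H_K^\circ,H_{I\setminus J}^\circ,H_{J\setminus I}^\circ$ and consecutive letters belong to different blocks. The decisive feature, which is exactly the defining property of freeness, is that two reduced words are orthogonal in $\Le^2(\NC)$ unless they have the same length and the same sequence of blocks. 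In this picture $\Le^2_I$ is the closed span of the reduced words all of whose letters come from $A_K$ and $A_{I\setminus J}$, similarly $\Le^2_J$ for words in $A_K$ and $A_{J\setminus I}$, while $\Le^2_K=\CC1\oplus H_K^\circ$.

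For (i), take $z\in A_J$ and expand it over this basis: $z=t\cdot1+\sum_\alpha w_\alpha$ with $t\in\CC$ and each $w_\alpha$ a reduced word in $A_K$ and $A_{J\setminus I}$. Since two letters from the same block cannot be adjacent, a reduced word in these two blocks either is a single letter of $H_K^\circ$ or else contains at least one letter of $H_{J\setminus I}^\circ$. Accordingly write $z=z_K+z'$, where $z_K$ gathers the scalar term and the single $H_K^\circ$-letters (so that $z_K\in A_K$) and $z'$ gathers the words containing an $H_{J\setminus I}^\circ$-letter. Each word in $z'$ has a block pattern involving an index of $J\setminus I$; as $(J\setminus I)\cap I=\es$, this pattern never matches that of a reduced word of $\Le^2_I$, so by the orthogonality of distinct reduced words $z'\perp\Le^2_I$, giving $\proj_I(z')=0$. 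As $z_K\in A_K\subseteq\Le^2_K\subseteq\Le^2_I$, we have $\proj_I(z_K)=z_K$, whence $\proj_I(z)=z_K$ is a polynomial in the variables $x_k$ with $k\in I\cap J$. Extending by continuity, the same computation yields $\proj_I(\Le^2_J)\subseteq\Le^2_K$.

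Part (ii) then follows by duality. For $z\in\NC$ we have $\proj_J(z)\in\Le^2_J$, hence $\proj_I\proj_J(z)\in\Le^2_K$ by (i). For every $y\in\Le^2_K$, using $K\subseteq I$, $K\subseteq J$ and the self-adjointness of orthogonal projections, $\langle\proj_I\proj_J(z),y\rangle=\langle\proj_J(z),y\rangle=\langle z,y\rangle$; thus $\proj_I\proj_J(z)$ is the element of $\Le^2_K$ realising the inner products of $z$ against all of $\Le^2_K$, that is $\proj_{I\cap J}(z)$. The single substantive ingredient is the orthogonality of distinct reduced words used above, equivalently the vanishing of the trace of an alternating product of centered elements; I expect this to be the step that must be quoted or proved carefully rather than verified by direct computation, since evaluating $\langle v,w\rangle=\tr(v^*w)$ by hand would force one to resolve the non-alternating junctions created when the words $v^*$ and $w$ are concatenated.
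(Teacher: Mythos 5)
Your proof is correct, and part (i) takes a genuinely different route from the paper's. The paper expands $\proj_I(z)$ via the moment--cumulant formula for the conditional expectation onto $\Le^2_I$ (operator-valued free cumulants) and then invokes the characterization of freeness with amalgamation from \cite[Theorem~3.6]{Nica02} to collapse every conditional cumulant involving a variable from $\CC\langle x_j\colon j\in J\setminus I\rangle$ into a scalar, so that what survives is a polynomial in the $I\cap J$-letters. You instead work with the free-product picture of $\Le^2$: the three mutually free blocks attached to $I\cap J$, $I\setminus J$, $J\setminus I$ generate an orthogonal decomposition into reduced alternating words of centered letters, and any word containing a $J\setminus I$-letter has a block pattern that cannot occur in $\Le^2_I$, hence is annihilated by $\proj_I$. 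Both arguments outsource exactly one standard fact: the paper cites the operator-valued cumulant machinery, while you cite the orthogonality of reduced words with distinct block patterns --- which, as you anticipate, is proved by induction on total word length by centering the junction created in $v^*w$, and is the computation underlying the free product construction (cf.\ \cite{Biane}, \cite[Section~2.5]{Mingo17}); so quoting it is no less legitimate than what the paper does. Your route is arguably more elementary, and it has a small structural payoff: by continuity it gives $\proj_I(\Le^2_J)\subseteq\Le^2_{I\cap J}$ directly, so your part (ii) dispenses with the closure argument the paper needs to reduce to the polynomial case (otherwise the two proofs of (ii) are the same verification of the two defining properties of the orthogonal projection). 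One remark for later use: the paper applies (i) to \emph{grouped} variables (the free family $s_m,x_{m+1},\ldots,x_n$ in Proposition~\ref{pro:symmetry}), and your argument covers this as well since nothing in it requires the blocks to be singly generated --- it applies verbatim with the blocks $\CC\langle x_k\rangle$ replaced by arbitrary mutually free subalgebras, which is the form actually needed there.
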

\begin{proof}
  \begin{itemize}
  \item[(i)] By linearity of~$\proj_I$, we may suppose without loss of generality that $z=a_1\cdots a_r$ with
  $a_j\in\CC\langle x_{i_j}\rangle$ and
  consecutively distinct indices $i_1,\ldots,i_r\in J$.
  From the moment-cumulant formula
  \cite[Definition~9.2.7]{Mingo17}
  w.r.t.\ the conditional expectation~$\proj_I$, we can write
  \end{itemize}
\[\proj_I(z)\ceq\!\!\!\sum_{\pi\in\mathrm{NC}(r)}\!\!\!\!\!
  \kappa^I_\pi(a_1,\ldots,a_r),\]
  where the summation ranges over all non-crossing partitions
  $\pi\in\mathrm{NC}(r)$ of $\{1,\ldots,r\}$ and the
  $\kappa^I_\pi$ are nestings (consistently with the blocks
  of~$\pi$) of the free (conditional) cumulants
  $\kappa^I_n\colon\NC^n\to\Le^2_I$ (which
  can be defined inductively). For instance, if
  \begin{gather*}
  	I=\{1,4\},\enspace J=\{1,2,3,4\},\enspace
  	z=x_1^4\cdot x_2^3\cdot x_4\cdot x_1^*\cdot x_2^*\cdot x_3^2\cdot x_1x_1^*\cdot x_4^3\cdot x_3^5\cdot x_4\ (r=10),\\\text{and}\enspace
  	\pi=\!\!\Matching{10}{1/10, 2/5, 5/9, 3/4, 7/8}\!\!,
  \end{gather*}
  then
  \[\kappa^I_\pi(a_1,\ldots,a_r)=
  \kappa^I_2\biggl(x_1^4\cdot\kappa^I_3\Bigl(x_2^3
  \cdot\kappa^I_2\bigl(x_4,x_1^*\bigr),x_2^*\cdot
  \kappa^I_1\bigl(x_3^2\bigr)
  \cdot\kappa^I_2\bigl(x_1x_1^*,x_4^3\bigr),x_3^5\Bigr),x_4
  \biggr).\]
  We now invoke \cite[Theorem~3.6]{Nica02} with
  $F\ceq\tau\rvert_{\Le^2_I}$ and the sub-algebra
  $N\ceq\CC\langle x_j\colon j\in J\setminus I\rangle$ free from
  $B\ceq\Le^2_I$ over $D\ceq\CC\cdot1$
  to see that all
  conditional cumulants involving any
  variable in~$N$
  reduce to constants; e.g.,
  $\kappa^I_3(\ldots)=\tau\bigl(\kappa^I_2(x_4,x_1^*)\bigr)
  \tau\bigl(x_3^2\bigr)
  \tau\bigl(\kappa^I_2(x_1x_1^*,x_4^3)\bigr)
  \kappa_3(x_2^3,x_2^*,x_3^5)$
  in the above display,
  where~$\kappa_3$ is the (unconditionned) cumulant
  $\NC^3\to\CC$. This shows
  that~$\proj_I(z)$ is in fact a polynomial only in the subset of
  the variables $a_1,\ldots,a_r$ that belong to
  $\CC\langle x_k\colon k\in I\cap J\rangle$ (in the example,
  $\kappa^I_\pi(a_1,\ldots,a_r)
  =\tau\bigl(\kappa^I_2(x_4^*,x_1)\bigr)
  \tau\bigl(x_3^2\bigr)
  \tau\bigl(\kappa^I_2(x_1x_1^*,x_4^3)\bigr)
  \kappa_3(x_2^3,x_2^*,x_3^5)\,
  \kappa^I_2(x_1^4,x_4)$ is indeed a polynomial in
  the $\CC\langle x_1,x_4\rangle$-variables
  $a_1=x_1^4,\,a_3=a_{10}=x_4,\,a_4=x_1^*,\,a_7=x_1x_1^*,\,a_8=x_4^3$).

  \noindent
 \begin{itemize}
 	\item[(ii)]
  Clearly
  $\tr\bigl(x\proj_I\circ\proj_J(z)\bigr)
  =\tr\bigl(\proj_I(\proj_J(xz))\bigr)
  =\tr(xz)$
  for all $x\in\Le^2_{I\cap J}$, so we only need to check that
  $\proj_I\bigl(\proj_J(z)\bigr)\in\Le^2_{I\cap J}$.
  By a closure
  argument, we may suppose that
  $\proj_J(z)$ belongs to $\CC\langle x_j\colon j\in J
  \rangle$.
  In this case
  $\proj_I\bigl(\proj_J(z)\bigr)\in\CC\langle x_k\colon k\in I\cap J\rangle\subseteq\Le^2_{I\cap J}$
  by the previous point.\qedhere
  \end{itemize}
\end{proof}

We now set, for every $z\in\NC$,
\begin{equation}
  z_I\ceq\sum_{J\subseteq I}(-1)^{|I|-|J|}\proj_J(z)
\in\Le^2_I,\label{eq:defzpart}
\end{equation}
where~$|\cdot|$ is the cardinal notation. The following
decomposition will play a crucial role in the proof of Theorem~\ref{thm:maxcorr}. 
\begin{lemma}[Efron--Stein decomposition]\label{lem:efronstein}
	For every finite set $I\subset\NN$,
	\begin{equation*}
    	\proj_I(z)=\sum_{J\subseteq I}z_J.
	\end{equation*}
\end{lemma}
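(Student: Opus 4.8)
The plan is to prove the identity by a direct Möbius-inversion (inclusion–exclusion) computation on the Boolean lattice of subsets of~$I$. Once the definition \eqref{eq:defzpart} of~$z_J$ is unwound, the statement is purely combinatorial: it uses none of the freeness structure from Lemma~\ref{lem:proj}, only the fact that the $\proj_K(z)$ are well-defined vectors in~$\Le^2_I$ together with linearity of the sums involved.

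Concretely, I would first insert $z_J=\sum_{K\subseteq J}(-1)^{|J|-|K|}\proj_K(z)$ into the right-hand side $\sum_{J\subseteq I}z_J$ and interchange the order of summation, grouping the terms according to the inner index~$K$. For each fixed $K\subseteq I$, this collects a coefficient $\sum_{J\,:\,K\subseteq J\subseteq I}(-1)^{|J|-|K|}$ in front of $\proj_K(z)$. The next step is to reparametrize the inner sum by $L\ceq J\setminus K$, which then ranges freely over all subsets of $I\setminus K$ and satisfies $|J|-|K|=|L|$; the coefficient therefore equals $\sum_{L\subseteq I\setminus K}(-1)^{|L|}=(1-1)^{|I\setminus K|}$. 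This vanishes unless $I\setminus K=\es$, that is unless $K=I$, in which case it equals~$1$. Hence only the term $K=I$ survives in the double sum, yielding $\sum_{J\subseteq I}z_J=\proj_I(z)$.

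I expect no genuine obstacle here: the only points to handle with care are the bookkeeping in the index swap and the standard vanishing identity $\sum_{L\subseteq S}(-1)^{|L|}=[S=\es]$. Equivalently, one may phrase the statement as saying that $J\mapsto z_J$ and $K\mapsto\proj_K(z)$ are Möbius transforms of one another on the poset $(2^I,\subseteq)$, whose Möbius function is $\mu(K,J)=(-1)^{|J|-|K|}$; with this language the inversion $\sum_{J\subseteq I}z_J=\proj_I(z)$ is immediate from Möbius inversion on a finite lattice, so the argument above is simply the explicit unwinding of that principle.
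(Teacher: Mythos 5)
Your proof is correct and follows essentially the same route as the paper's: expanding each $z_J$ via \eqref{eq:defzpart}, swapping the order of summation, and observing that the coefficient of $\proj_K(z)$ is $(1-1)^{|I\setminus K|}$, which vanishes unless $K=I$. The Möbius-inversion framing is a nice way to package the identity, but it is the same computation; there is nothing to add.
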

\begin{proof}
	We repeat in a compact way the argument of Efron and
	Stein~\cite{Efron81}:
	\begin{align*}
		\sum_{J\subseteq I}z_J
		&=\sum_{J\subseteq I}\sum_{K\subseteq J}
		(-1)^{|J|-|K|}\proj_K(z)\\
		&=\sum_{K\subseteq I}\underbrace{\left(\sum_{K\subseteq J\subseteq I}\!\!\!
			(-1)^{|J|-|K|}\right)}_{\text{$=(1-1)^{|I\setminus K|}$}}
		\proj_K(z)\\
		&=\proj_I(z).\qedhere
	\end{align*}
\end{proof}

The elements~$z_J$ will be orthogonal thanks to this direct
consequence of Lemma~\ref{lem:proj}:
\begin{lemma}\label{lem:orthogonality}
	Suppose that $x_1,x_2,\ldots$ are free, and
	let $I,J\subset\NN$ be finite sets such that $I\setminus J \neq\es$. Then $\proj_J(z_I)=0$ for every
	$z\in\NC$. In particular, $z_I$ is orthogonal
	to $z_J\in\Le^2_J$.
\end{lemma}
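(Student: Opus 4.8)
The plan is to expand $\proj_J(z_I)$ by linearity, collapse the composed projections via Lemma~\ref{lem:proj}(ii), and then kill the resulting sum by a single sign-cancellation argument. First I would substitute the definition~\eqref{eq:defzpart} of~$z_I$ and pull $\proj_J$ inside the (finite) sum:
\[\proj_J(z_I)=\sum_{K\subseteq I}(-1)^{|I|-|K|}\,\proj_J\bigl(\proj_K(z)\bigr).\]
Since $x_1,x_2,\ldots$ are assumed free, Lemma~\ref{lem:proj}(ii) applies to every pair $(J,K)$ and yields $\proj_J\circ\proj_K=\proj_{J\cap K}$, so that
\[\proj_J(z_I)=\sum_{K\subseteq I}(-1)^{|I|-|K|}\,\proj_{J\cap K}(z).\]

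Next I would exploit the hypothesis $I\setminus J\neq\es$ by fixing some index $i_0\in I\setminus J$. Toggling the membership of~$i_0$ defines a fixed-point-free involution $K\mapsto K\triangle\{i_0\}$ on the subsets of~$I$. Because $i_0\notin J$, one has $J\cap(K\triangle\{i_0\})=J\cap K$, so the two paired terms share the same projection $\proj_{J\cap K}(z)$; meanwhile $|K|$ changes by exactly one, so their signs $(-1)^{|I|-|K|}$ are opposite. Pairing each~$K$ with its image therefore cancels the sum term by term, giving $\proj_J(z_I)=0$.

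Finally, for the orthogonality assertion, I would use that $z_J\in\Le^2_J$ and that $\proj_J$ is the orthogonal (hence self-adjoint) projection onto~$\Le^2_J$: since $\proj_J(z_J)=z_J$, we get
\[\langle z_I,z_J\rangle=\langle z_I,\proj_J(z_J)\rangle=\langle\proj_J(z_I),z_J\rangle=\langle 0,z_J\rangle=0.\]

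I do not expect a genuine obstacle here: the whole argument is powered by Lemma~\ref{lem:proj}(ii), which is precisely the place where freeness enters (without the commutation $\proj_J\circ\proj_K=\proj_{J\cap K}$ the reduction would fail and the terms would not line up for cancellation). The only point deserving a line of care is making the involution rigorous, namely checking that it has no fixed points and that $J\cap K$ is invariant under it, both of which are immediate from $i_0\notin J$.
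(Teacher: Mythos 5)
Your proof is correct and follows essentially the same route as the paper: expand $z_I$ via~\eqref{eq:defzpart}, collapse $\proj_J\circ\proj_K$ to $\proj_{J\cap K}$ using Lemma~\ref{lem:proj}(ii), and cancel the alternating sum using $I\setminus J\neq\es$. The only cosmetic difference is that the paper organizes the cancellation by grouping subsets $K$ according to $L=J\cap K$ and invoking $\sum_{K'\subseteq I\setminus J}(-1)^{|K'|}=(1-1)^{|I\setminus J|}=0$, whereas you realize the same cancellation bijectively via the sign-reversing involution $K\mapsto K\triangle\{i_0\}$; these are two phrasings of one and the same parity argument.
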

\begin{proof}
  Apply Lemma~\ref{lem:proj.2} 
  and gather the subsets
  $K\subseteq I$ that have same intersection $L\ceq J\cap K$
  with~$J$:
  \begin{align*}
  	\proj_J(z_I)
  	&=\sum_{K\subseteq I}(-1)^{|I|-|K|}\proj_J\circ \proj_K(z)\\
  	&=\!\!\!\sum_{L\subseteq I\cap J}\!\!\!(-1)^{|I|-|L|}
  	\underbrace{\left(\sum_{K\subseteq I\setminus J}\!\!\!
  	(-1)^{|K|}\right)}_{=(1-1)^{|I\setminus J|}}
  \proj_L(z)\\&=0.\qedhere
  \end{align*}
\end{proof}

To prove Theorem~\ref{thm:maxcorr} we shall finally exploit the fact
that the partial sum $s_n\ceq x_1+\cdots+x_n$ is
\emph{symmetric} in $(x_1,\ldots,x_n)$. Our next proposition is
tailored for this purpose.
\begin{proposition}\label{pro:symmetry}
  Suppose that $x_1,\ldots,x_n$
  are free and identically distributed.
  \begin{enumerate}
  	\item For every symmetric polynomial $z=p(x_1,\ldots,x_n)$ in
  	  $x_1,\ldots,x_n$ and every $I\subseteq\{1,\ldots, n\}$,
  	  the pair
  	  $(z_I,z_I^*)$
  	  has the same distribution as
  	  $(z_{I'},z_{I'}^*)$ where $I'\ceq\{1,\ldots,|I|\}$.\\
      Consequently, if $\tr(z)=0$, then
      \[\bigl\|\proj_I(z)\bigr\|_2
         \le\sqrt{\frac{|I|}n}\,\|z\|_2.\]
    \item For every $m\le n$
     and every polynomial~$p$,
    \[\proj_{\Le^2(x_1,\ldots,x_m)}\bigl(p(s_n)\bigr)
      =\proj_{\Le^2(s_m)}\bigl(p(s_n)\bigr).\]
  \end{enumerate}
\end{proposition}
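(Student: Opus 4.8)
The plan is to treat the two parts separately, with the distributional symmetry of the~$x_i$ doing the work in part~(1) and freeness of the complementary sum doing the work in part~(2).

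For part~(1), the starting point is that, since the~$x_i$ are free and identically distributed, their joint distribution is invariant under permutations: any permutation~$\sigma$ of~$\{1,\ldots,n\}$ induces a trace-preserving $*$-isomorphism~$\Phi_\sigma$ of~$\CC\langle x_1,\ldots,x_n\rangle$ sending $x_i\mapsto x_{\sigma(i)}$. First I would record that~$\Phi_\sigma$ intertwines the projections, $\Phi_\sigma\circ\proj_J=\proj_{\sigma(J)}\circ\Phi_\sigma$, which is immediate from the characterization of~$\proj_J$ by trace identities together with the facts that~$\Phi_\sigma$ is trace-preserving and maps~$\Le^2_J$ onto~$\Le^2_{\sigma(J)}$. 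Because~$z$ is symmetric we have $\Phi_\sigma(z)=z$, so applying~$\Phi_\sigma$ to the defining sum~\eqref{eq:defzpart} and reindexing $J\mapsto\sigma(J)$ gives $\Phi_\sigma(z_I)=z_{\sigma(I)}$. Choosing~$\sigma$ with $\sigma(I)=I'$ and using that~$\Phi_\sigma$ preserves all $*$-moments yields the asserted equality of the distributions of~$(z_I,z_I^*)$ and~$(z_{I'},z_{I'}^*)$.

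For the norm bound I would combine the Efron--Stein decomposition (Lemma~\ref{lem:efronstein}) with the orthogonality of the~$z_J$ (Lemma~\ref{lem:orthogonality}). Since $z\in\Le^2_{\{1,\ldots,n\}}$, these give
\[
  \bigl\|\proj_I(z)\bigr\|_2^2=\sum_{J\subseteq I}\|z_J\|_2^2,
  \qquad
  \|z\|_2^2=\sum_{J\subseteq\{1,\ldots,n\}}\|z_J\|_2^2.
\]
By the first part of~(1), the quantity $a_{|J|}\ceq\|z_J\|_2^2$ depends only on~$|J|$, and $a_0=\|\tr(z)\cdot1\|_2^2=0$ when $\tr(z)=0$. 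Writing $\ell\ceq|I|$, the claim reduces to $\sum_{k\ge1}\binom{\ell}{k}a_k\le\frac{\ell}{n}\sum_{k\ge1}\binom{n}{k}a_k$, which—all $a_k\ge0$—follows termwise from $\binom{\ell}{k}\le\frac{\ell}{n}\binom{n}{k}$; this last inequality is exactly $\prod_{j=1}^{k-1}\frac{\ell-j}{n-j}\le1$, valid since $\ell\le n$.

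For part~(2), set $B\ceq\Le^2(x_1,\ldots,x_m)$ and $t\ceq x_{m+1}+\cdots+x_n$, so that $s_n=s_m+t$ with~$t$ free from~$B$. Since $\Le^2(s_m)\subseteq B$, the tower property reduces the statement to showing that $W\ceq\proj_B\bigl(p(s_n)\bigr)$ already lies in~$\Le^2(s_m)$. The idea, mirroring the classical identity $E[\,p(S_m+T)\mid X_1,\ldots,X_m\,]=g(S_m)$ with~$T$ independent, is to expand $p(s_m+t)$ into a linear combination of words in the two variables $s_m\in B$ and~$t$, and apply~$\proj_B$. Because~$t$ is free from~$B$, its $B$-valued conditional cumulants reduce to scalars—this is exactly the input from~\cite[Theorem~3.6]{Nica02} already used in Lemma~\ref{lem:proj}—so~$\proj_B$ of each such word is a scalar linear combination of products of powers of~$s_m$, hence an element of~$\Le^2(s_m)$. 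The main obstacle is precisely this last step: making rigorous that projecting onto~$B$ ``integrates out''~$t$ and leaves only a polynomial in~$s_m$. Everything hinges on~$t$ being free from the \emph{whole} algebra~$B$ (not merely from~$s_m$), which forces the surviving terms to be built solely from the $B$-variable that actually occurs, namely~$s_m$; part~(1) is comparatively routine once the equivariance $\Phi_\sigma\circ\proj_J=\proj_{\sigma(J)}\circ\Phi_\sigma$ is in hand.
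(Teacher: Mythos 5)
Your proposal is correct and follows essentially the same route as the paper: for part (1), permutation symmetry of the free identically distributed family combined with the Efron--Stein decomposition, orthogonality, and the termwise binomial estimate $\binom{\ell}{k}\le\frac{\ell}{n}\binom{n}{k}$; for part (2), the same reduction to showing $\proj_{\Le^2(x_1,\ldots,x_m)}(p(s_n))\in\Le^2(s_m)$ followed by the Nica--Shlyakhtenko--Speicher cumulant argument underlying Lemma~\ref{lem:proj}(i). The only cosmetic difference is that you formalize the symmetry step through the trace-preserving automorphism $\Phi_\sigma$ and the intertwining relation $\Phi_\sigma\circ\proj_J=\proj_{\sigma(J)}\circ\Phi_\sigma$, whereas the paper reaches the same conclusion by writing $z_I$ and $z_{I'}$ as one polynomial $q$ evaluated at two equidistributed tuples and comparing moments.
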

\begin{proof}
  \begin{enumerate}
  	\item Let $m=|I|$ and let~$\varsigma$ be a permutation of
  	$\{1,\ldots,n\}$ mapping~$I'$ to~$I$.
  	Since~$p$ is symmetric, we can write
  	$z=p(x_{\varsigma(1)},\ldots,x_{\varsigma(n)})$.
  	By Lemma~\ref{lem:proj.1}, we see from its
  	definition in~\eqref{eq:defzpart} that~%
  	$z_I$ is a polynomial $q(x_{\varsigma(1)},
  	\ldots,x_{\varsigma(m)})$ in the 
  	$x_i,\,i\in I$. Now the pairs
  	$(z_I,z_I^*)=(q(x_{\varsigma(1)},\ldots,x_{\varsigma(m)}),
  	q(x_{\varsigma(1)},\ldots,x_{\varsigma(m)})^*)$ and
  	$(z_{I'},z_{I'})=(q(x_1,\ldots,x_m),q(x_1,\ldots,x_m)^*)$
  	have the same moments because $x_1,\ldots,x_n$ are free
  	and identically distributed. In particular
  	$\|z_I\|_2=\|z_{I'}\|_2$, and the stated inequality
  	becomes clear by combining
  	this with Lemmas~\ref{lem:efronstein} and \ref{lem:orthogonality}:
  	\begin{align*}
  	  \bigl\|{\proj_I(z)}\bigr\|_2^2
  	  &=\sum_{J\subseteq I}\|z_J\|_2^2\\
  	  &=\sum_{k=1}^m\binom mk\|z_{\{1,\ldots,k\}}\|_2^2\\
  	  &\le\frac mn\sum_{k=1}^n\binom nk\|z_{\{1,\ldots,k\}}\|_2^2\\[.4em]
  	  &=\frac mn\,\bigl\|{\proj_{\{1,\ldots,n\}}(z)}\bigr\|_2^2\\%
  	  [.4em]
  	  &=\frac mn\,\|z\|_2^2,
  	\end{align*}
    using that $z_\es=0$, that
    $\binom mk\le\frac mn\binom nk$
    for all $1\le k\le m\le n$,
    and that $z\in\Le^2(x_1,\ldots,x_n)$.
    \item We only need to check that
    $\proj_{\Le^2(x_1,\ldots,x_m)}(p(s_n))\in\Le^2(s_m)$ since
    \[\forall y\in\Le^2(s_m),\ 
      \tr\bigl(y\proj_{\Le^2(x_1,\ldots,x_m)}(p(s_n))\bigr)
      =\tr\bigl(\proj_{\Le^2(x_1,\ldots,x_m)}(yp(s_n))\bigr)
      =\tr\bigl(yp(s_n)\bigr)\]
    (as $\Le^2(s_m)\subset\Le^2(x_1,\ldots,x_m)$).
    But $p(s_n)$ is a polynomial in $s_m,x_{m+1},\ldots,
    x_n$ and freeness of $s_m,x_{m+1},\ldots,x_n$
    again implies by Lemma~\ref{lem:proj.1}
    that
    $\proj_{\Le^2(x_1,\ldots,x_m)}(p(s_n))\in\CC\langle s_m\rangle$.\qedhere
  \end{enumerate}
\end{proof}
\begin{proof}[Proof of Theorem~\ref*{thm:maxcorr}]
  The lower bound $R(s_n,s_m)\ge\sqrt{m/n}$ is straightforward
  since, by freeness,
  $\std(s_n)^2=n\std(x_1)^2$
  and $\cov(s_n,s_m)=\std(s_m)^2=m\std(x_1)^2$.
  For the upper bound, we must show that
  $\rho(z,z')\le\sqrt{m/n}$ for all $z\in\Le^2(s_n)$ and $z'\in\Le^2(s_m)$. W.l.o.g.,
  we may suppose that $\tau(z)=\tau(z')=0$ and, by another
  closure argument, that~$z$ is a polynomial in~$s_n$ (and thus
  a symmetric polynomial in~$x_1,\ldots,x_n$).
  Then by the Cauchy--Schwarz inequality
  and Proposition~\ref{pro:symmetry},
  \begin{align*}
\cov(z,z')&=\langle z,z'\rangle\\
&=\bigl\langle\proj_{\Le^2(s_m)}(z),z'\bigr\rangle\\
&\le\bigl\|\proj_{\Le^2(s_m)}(z)\bigr\|_2\,\|z'\|_2\\
&\le\sqrt{\frac mn}\,\|z\|_2\,
  \|z'\|_2\\
&=\sqrt{\frac mn}\std(z)\std(z'),
  \end{align*}
  and the proof is complete.
\end{proof}

\section{Monotonicity of the free entropy and free Fisher information}\label{sec:entropy}

The goal of this section is to prove Corollary~\ref{cor:monotonicity}. 
Let us start by noting that the free entropy and free Fisher information of a self-adjoint element $z\in \NC$ are related 
through
the integral formula (see~\cite[Chapter~8]{Mingo17})
\begin{equation}\label{eq: integral representation}
\chi^*(z)=\frac12\int_0^\infty\left(\frac1{1+t}-\Phi\!\left(z+\sqrt t\,x\right)\!
\right)\,\dd t+\frac12\log(2\pi\operatorname{e}),
\end{equation}
where~$x$ is a standard semi-circular variable free from~$z$, and $\Phi$ denotes the free Fisher information. 
After \cite{Voiculescu98} (see also \cite[Chapter~8]{Mingo17}), the free
Fisher information of a noncommutative, self-adjoint
random variable $z\in\NC$ is defined as
$\Phi(z)\ceq\|\xi\|_2^2$
where the so called \textit{conjugate variable}
$\xi\ceq\xi(z)$ is any element of~$\Le^2(z)$ such that,
for every integer $r\ge0$,
\begin{equation}
  \tr\bigl(\xi z^r\bigr)
  =\sum_{k=0}^{r-1}\tr\bigl(z^k\bigl)\tr\bigl(z^{r-1-k}\bigr).%
  \label{eq:fisher}
\end{equation}
(If such a~$\xi$ does not exist, we set~$\Phi(z)\ceq\infty$.) We note from~\eqref{eq:fisher} that
$\tau(\xi(z))=0$ and the homogeneity property
$\Phi(\alpha z)=\alpha^{-2}\,\Phi(z),\,\alpha>0$.

In the next Corollary, we show how the monotonicity of the free Fisher information follows easily from Theorem~\ref{thm:maxcorr}. 
\begin{corollary}\label{cor:fisher}
  Let $(x_i)_{i\in \NN}$ be a sequence of free, identically 
distributed, self-adjoint random variables in $(\NC,\tr)$ and denote $s_k:= x_1+\ldots +x_k$ for every positive integer $k$.  Then for all positive integers $m\le n$, we have 
  \[\Phi\left(\frac{s_n}{\sqrt n}\right)
  \le\Phi\left(\frac{s_m}{\sqrt m}\right).\]
\end{corollary}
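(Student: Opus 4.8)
The plan is to deduce the inequality from Theorem~\ref{thm:maxcorr} together with the homogeneity relation $\Phi(\alpha z)=\alpha^{-2}\Phi(z)$. Write $\xi_k\ceq\xi(s_k)$ for the conjugate variable of $s_k$. By homogeneity $\Phi(s_k/\sqrt k)=k\,\Phi(s_k)=k\,\|\xi_k\|_2^2$, so the claim $\Phi(s_n/\sqrt n)\le\Phi(s_m/\sqrt m)$ is equivalent to
\[n\,\|\xi_n\|_2^2\le m\,\|\xi_m\|_2^2.\]
If $\Phi(s_m)=\infty$ there is nothing to prove, so I would assume $\xi_m$ exists. The whole point is to produce $\xi_n$ \emph{from} $\xi_m$ by a single orthogonal projection and then measure the loss of norm with the maximal correlation coefficient.

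The key step is the identity $\proj_{\Le^2(s_n)}(\xi_m)=\xi_n$. To prove it, first write $s_n=s_m+t$ with $t\ceq x_{m+1}+\cdots+x_n$, so that $s_m$ and $\CC\langle t\rangle$ are free. The crux is that adjoining a free summand does not alter the conjugate variable: $\xi_m$ is also the conjugate variable of $s_m$ \emph{relative to} $\CC\langle t\rangle$, meaning $\tr(\xi_m\,w)=(\tau\otimes\tau)(\partial w)$ for every $w\in\CC\langle s_m,t\rangle$, where $\partial$ is the free difference quotient in the variable $s_m$ that annihilates $t$. Granting this, I would evaluate at $w=s_n^{\,r}$; since $\partial$ is a derivation with $\partial s_n=\partial(s_m+t)=1\otimes 1$, one gets $\partial s_n^{\,r}=\sum_{k=0}^{r-1}s_n^{\,k}\otimes s_n^{\,r-1-k}$, hence
\[\tr\bigl(\xi_m\,s_n^{\,r}\bigr)=\sum_{k=0}^{r-1}\tr\bigl(s_n^{\,k}\bigr)\tr\bigl(s_n^{\,r-1-k}\bigr)\qquad(r\ge0).\]
These are exactly the defining relations \eqref{eq:fisher} of $\xi(s_n)$; since $\proj_{\Le^2(s_n)}(\xi_m)$ lies in $\Le^2(s_n)$ and reproduces them against every $s_n^{\,r}$, it must equal $\xi_n$ (in particular $\xi_n$ exists whenever $\xi_m$ does). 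I expect this relative‑conjugate‑variable statement to be the main obstacle: it is a standard consequence of freeness, but to stay self‑contained one can instead verify the displayed moment identity directly, by expanding $(s_m+t)^r$ and collapsing each word through the freeness of $t$ from $\{s_m,\xi_m\}$ and the unconditioned relation \eqref{eq:fisher} for $\xi_m$.

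Finally I would read off the inequality from Theorem~\ref{thm:maxcorr}. Both $\xi_m$ and $\xi_n=\proj_{\Le^2(s_n)}(\xi_m)$ are centered (since $\tau(\xi_m)=0$), so on them $\cov$ agrees with $\langle\cdot,\cdot\rangle$ and $\std$ with $\|\cdot\|_2$. Using $\xi_n\in\Le^2(s_n)$ and the self‑adjointness of the projection,
\[\|\xi_n\|_2^2=\langle\xi_n,\xi_n\rangle=\bigl\langle\xi_n,\proj_{\Le^2(s_n)}(\xi_m)\bigr\rangle=\langle\xi_n,\xi_m\rangle\le\sqrt{\tfrac mn}\,\|\xi_n\|_2\,\|\xi_m\|_2,\]
the last inequality being the maximal correlation bound applied to $\xi_n\in\Le^2(s_n)$ and $\xi_m\in\Le^2(s_m)$. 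Dividing by $\|\xi_n\|_2$ (the case $\xi_n=0$ being trivial) yields $\|\xi_n\|_2\le\sqrt{m/n}\,\|\xi_m\|_2$, i.e.\ $n\,\Phi(s_n)\le m\,\Phi(s_m)$, which is the asserted monotonicity.
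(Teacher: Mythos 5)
Your proof is correct and takes essentially the same approach as the paper: identify $\xi(s_n)=\proj_{\Le^2(s_n)}\bigl(\xi(s_m)\bigr)$, bound its norm against $\xi(s_m)$ via Theorem~\ref{thm:maxcorr}, and conclude by the homogeneity of $\Phi$. The only difference is that the paper simply cites \cite[p.~206]{Mingo17} for the projection identity, whereas you re-derive it through the free-difference-quotient/relative-conjugate-variable argument, which is precisely the standard proof of that cited fact.
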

\begin{proof}
  Assume the existence of~$\xi(s_1)$, as otherwise
  $\Phi(s_1)=\infty$ and there is nothing to prove.
  According to~\cite[p.\ 206]{Mingo17},
  the free sum $s_n=s_m+(s_n-s_m)$ admits
  $\xi(s_n)=\proj_{\Le^2(s_n)}(\xi(s_m))$ as conjugate variable.
  Therefore, by Theorem~\ref{thm:maxcorr},
  \begin{align*}
  	\Phi(s_n)=\bigl\|\proj_{\Le^2(s_n)}\bigl(\xi(s_m)\bigr)\bigr\|_2^2
  	&=\cov\left(\proj_{\Le^2(s_n)}\bigl(\xi(s_m)\bigr),\xi(s_m)\right)\\
  	&\le\sqrt{\frac mn}\,\bigl\|\proj_{\Le^2(s_n)}\bigl(\xi(s_m)\bigr)
  	  \bigr\|_2\,\bigl\|\xi(s_m)\bigr\|_2,
  \end{align*}
i.e., $\Phi(s_n)\le\frac mn\Phi(s_m)$. We conclude
by the homogeneity property.
\end{proof}
In view of~\eqref{eq: integral representation} and the
divisibility of the semicircular distribution w.r.t.\ the free
convolution, the above
corollary readily implies \eqref{eq: monotonicity},
thus proving Corollary~\ref{cor:monotonicity}.

\section{Monotonicity of the free Stein discrepancy}\label{sec:stein}

The goal of this section is to provide a proof of Corollary~\ref{cor:monotonicity-stein}. 
Let us fix  $m\leq n$ and $x_1,x_2,\ldots$ a sequence of free, centered, identically 
distributed, self-adjoint random variables in $(\NC,\tr)$ with unit norm. 
Let us record the following consequence of Theorem~\ref{thm:maxcorr} which will be used in the sequel. 

\begin{lemma}\label{lem:tensor-proj-norm}
Let $K\in \Le^2(s_m)\otimes \Le^2(s_m)$ and suppose that $\langle K, 1\otimes 1\rangle= 1$. Then 
$$
\Vert \proj_{\Le^2(s_n)\otimes \Le^2(s_n)}(K-1\otimes 1) \Vert_{\Le^2(s_n)\otimes \Le^2(s_n)}\leq 
\sqrt{\frac{m}{n}}\, \Vert K-1\otimes 1\Vert_{\Le^2(s_m)\otimes \Le^2(s_m)}.
$$
\end{lemma}
\begin{proof}
We may suppose without loss of generality that $\tr(b)=0$. 
We start by writing 
\begin{align*}
\Vert \proj_{\Le^2(s_n)\otimes \Le^2(s_n)} (a\otimes b)\Vert_{\Le^2(s_n)\otimes \Le^2(s_n)}
&=\Vert \proj_{\Le^2(s_n)}(a)\Vert_{\Le^2(s_n)}\, \Vert \proj_{\Le^2(s_n)}(b)\Vert_{\Le^2(s_n)}\\
&\leq \Vert a\Vert_{\Le^2(s_n)}\, \Vert \proj_{\Le^2(s_n)}(b)\Vert_{\Le^2(s_n)}.
\end{align*}
Now using Theorem~\ref{thm:maxcorr}, we have 
$$
\Vert \proj_{\Le^2(s_n)}(b)\Vert_{\Le^2(s_n)}^2=\langle \proj_{\Le^2(s_n)}(b), b\rangle \leq \sqrt{\frac{m}{n}}\, \Vert \proj_{\Le^2(s_n)}(b)\Vert_{\Le^2(s_n)} \, \Vert b\Vert_{\Le^2(s_n)},
$$
which yields to 
$$
\Vert \proj_{\Le^2(s_n)}(b)\Vert_{\Le^2(s_n)}\leq  \sqrt{\frac{m}{n}}\, \Vert b\Vert_{\Le^2(s_n)}.
$$
Putting the above together, we get 
$$
\Vert \proj_{\Le^2(s_n)\otimes \Le^2(s_n)} (a\otimes b)\Vert_{\Le^2(s_n)\otimes \Le^2(s_n)}\leq \sqrt{\frac{m}{n}}\, \Vert a\Vert_{\Le^2(s_n)}\,\Vert b\Vert_{\Le^2(s_n)},
$$
and finish the proof. 
\end{proof}

Given $K\in \Le^2(s_m)\otimes \Le^2(s_m)$ a free Stein kernel of $\frac{s_m}{\sqrt{m}}$, we have by the definition of $K$ (and homogeneity) that 
$$
\langle \frac{s_m}{\sqrt{m}}, P(s_m)\rangle =\sqrt{m}\, \langle K, \partial P(s_m)\rangle,
$$
for every polynomial $P$. 
Since $s_m$ and $s_n-s_m$ are free, then using the intertwining relation between $\partial$ 
and the conditional expectation \cite{Voiculescu00}, we can write 
$$
\langle \frac{s_m}{\sqrt{m}}, P(s_n)\rangle =\sqrt{m}\, \langle K, \partial P(s_n)\rangle.
$$
The above relation appears in \cite{Cebron} (before Lemma~2.5 there). 
Using linearity and that the $x_i$'s are identically distributed, we get 
$$
\langle s_m, P(s_n)\rangle= m\, \langle x_1, P(s_n)\rangle= \frac{m}{n}\, \langle s_n, P(s_n)\rangle.
$$
Putting the above relations together, we deduce that 
$$
\langle \frac{s_n}{\sqrt{n}}, P(s_n)\rangle= \sqrt{n}\, \langle K, \partial P(s_n)\rangle,
$$
for every polynomial $P$. This implies that $\proj_{\Le^2(s_n)\otimes \Le^2(s_n)}(K)$ is a free Stein kernel of 
$\frac{s_n}{\sqrt{n}}$. Thus, we have 
$$
\Sigma^*\left(\frac{s_n}{\sqrt n}\mid S\right)\leq \inf_K  \Vert \proj_{\Le^2(s_n)\otimes \Le^2(s_n)}(K-1\otimes 1)\Vert_{\Le^2(s_n)\otimes \Le^2(s_n)},
$$
where the infimum is taken over all free Stein kernels $K\in \Le^2(s_m)\otimes \Le^2(s_m)$ of $s_m/\sqrt{m}$. 
Noting that $\langle K-1\otimes 1,1\otimes 1\rangle =0$, then using Lemma~\ref{lem:tensor-proj-norm} we deduce 
$$
\Sigma^*\left(\frac{s_n}{\sqrt n}\mid S\right)\leq \sqrt{\frac{m}{n}}\, \inf_K  \Vert K-1\otimes 1\Vert_{\Le^2(s_n)\otimes \Le^2(s_n)},
$$
where the infimum is again taken over all free Stein kernels $K\in \Le^2(s_m)\otimes \Le^2(s_m)$ of $s_m/\sqrt{m}$. 
Corollary~\ref{cor:monotonicity-stein} then follows after noting that the projection on $\Le^2(s_m)\otimes \Le^2(s_m)$ of every free Stein 
kernel of $s_m/\sqrt{m}$ is also a free Stein kernel.

\noindent {\small Benjamin Dadoun,}\\
{\small Mathematics, Division of Science, New York University Abu Dhabi, UAE}\\
{\small \it E-mail: benjamin.dadoun@gmail.com}

\bigskip

\noindent {\small Pierre Youssef,}\\
{\small Mathematics, Division of Science, New York University Abu Dhabi, UAE}\\
{\small \it E-mail: yp27@nyu.edu}

\begin{thebibliography}{10}

\bibitem{Artstein04}
Shiri Artstein, Keith~M. Ball, Franck Barthe, and Assaf Naor, \emph{Solution of
  {S}hannon's problem on the monotonicity of entropy}, J. Amer. Math. Soc.
  \textbf{17} (2004), no.~4, 975--982. \MR{2083473}

	\bibitem{Bercovici93}
	Hari Bercovici and Dan Voiculescu, \emph{Free convolution of measures with
		unbounded support}, Indiana Univ. Math. J. \textbf{42} (1993), no.~3,
	733--773. \MR{1254116}

\bibitem{Biane}
Philippe Biane, \emph{Process with free increments}, Math. Z.227(1), 143--174 (1998).

\bibitem{CFM20}
Guillaume C\'ebron, Max Fathi, and Tobias Mai, \emph{A note on existence of free Stein kernels}, Proc. Amer. Math. Soc., 148 (2020), 1583--1594.

\bibitem{Cebron}
Guillaume C\'ebron, \emph{A quantitative fourth moment theorem in free probability theory}, Advances in
Mathematics, 380 (2021), 107579.

\bibitem{Courtade16}
Thomas~A. Courtade, \emph{Monotonicity of entropy and {F}isher information: a
  quick proof via maximal correlation}, Commun. Inf. Syst. \textbf{16} (2016),
  no.~2, 111--115. \MR{3638565}

\bibitem{CFP19}
Thomas A. Courtade, Max Fathi, and Ashwin Pananjady, \emph{Existence of Stein kernels under a spectral
gap, and discrepancy bounds},  Ann. Inst. H. Poincaré Probab. Statist., 55, 2 (2019), 777--790.

\bibitem{Dembo01}
Amir Dembo, Abram Kagan, and Lawrence~A. Shepp, \emph{Remarks on the maximum
  correlation coefficient}, Bernoulli \textbf{7} (2001), no.~2, 343--350.
  \MR{1828509}

\bibitem{Efron81}
B.~Efron and C.~Stein, \emph{The jackknife estimate of variance}, Ann. Statist.
  \textbf{9} (1981), no.~3, 586--596. \MR{615434}


\bibitem{FN17}
Max Fathi and Brent Nelson, \emph{Free Stein kernels and an improvement of the free logarithmic Sobolev
inequality},  Advances in Mathematics, 317 (2017), 193--223.

\bibitem{Hiai00}
Fumio Hiai and D\'{e}nes Petz, \emph{The semicircle law, free random variables
  and entropy}, Mathematical Surveys and Monographs, vol.~77, American
  Mathematical Society, Providence, RI, 2000. \MR{1746976}

\bibitem{JNVWY}
Zhengfeng Ji, Anand Natarajan, Thomas Vidick, John Wright and Henry Yuen, \emph{MIP*= RE}, arXiv preprint arXiv:2001.04383 (2020).

\bibitem{Mingo17}
James~A. Mingo and Roland Speicher, \emph{Free probability and random
  matrices}, Fields Institute Monographs, vol.~35, Springer, New York; Fields
  Institute for Research in Mathematical Sciences, Toronto, ON, 2017.
  \MR{3585560}

\bibitem{Nica96}
	Alexandru Nica and Roland Speicher, \emph{On the multiplication of free
		{$N$}-tuples of noncommutative random variables}, Amer. J. Math. \textbf{118}
	(1996), no.~4, 799--837. \MR{1400060}

\bibitem{Nica02}
Alexandru Nica, Dimitri Shlyakhtenko, and Roland Speicher,
  \emph{Operator-valued distributions. {I}. {C}haracterizations of freeness},
  Int. Math. Res. Not. (2002), no.~29, 1509--1538. \MR{1907203}


	
\bibitem{Shlyakhtenko07}
Dimitri Shlyakhtenko, \emph{A free analogue of Shannon's problem on monotonicity of entropy}, Adv. Math.
208 (2007), no. 2, 824Ð833. 

	\bibitem{Tao20}
	Dimitri Shlyakhtenko and Terence Tao.~With an~appendix~by David~Jekel,
	\emph{Fractional free convolution powers}, 2020. Available on arxiv:2009.01882. 





	\bibitem{Voiculescu94}
	Dan Voiculescu, \emph{The analogues of entropy and of {F}isher's information
		measure in free probability theory. {II}}, Invent. Math. \textbf{118} (1994),
	no.~3, 411--440. \MR{1296352}

	\bibitem{Voiculescu98}
	\bysame, \emph{The analogues of entropy and of {F}isher's information measure
		in free probability theory. {V}. {N}oncommutative {H}ilbert transforms},
	Invent. Math. \textbf{132} (1998), no.~1, 189--227. \MR{1618636}

\bibitem{Voiculescu00}
	\bysame, \emph{The coalgebra of the free difference quotient and free probability}, 
	 Int. Math. Res. Not. \textbf{2000} (2000), no. 2, 79--106.
	
	\bibitem{Voiculescu02}
	\bysame, \emph{Free entropy}, Bull. London Math. Soc. \textbf{34} (2002),
	no.~3, 257--278. \MR{1887698}
\end{thebibliography}
\end{document}